\documentclass[11pt]{article}

\usepackage[margin=1in]{geometry}
\usepackage{amsmath,amsthm,amssymb}
\usepackage{mathtools}
\usepackage{bbm}
\usepackage{graphicx}
\usepackage{booktabs}
\usepackage{hyperref}
\usepackage{enumitem}
\usepackage{algorithm}
\usepackage{algpseudocode}

\hypersetup{
    colorlinks=true,
    linkcolor=blue,
    citecolor=blue,
    urlcolor=blue
}

\newtheorem{theorem}{Theorem}
\newtheorem{lemma}{Lemma}
\newtheorem{proposition}{Proposition}

\theoremstyle{definition}
\newtheorem{definition}{Definition}

\theoremstyle{remark}

\newcommand{\cost}{\mathrm{cost}}
\newcommand{\OPT}{\mathrm{OPT}}
\newcommand{\FF}{\mathrm{FF}}

\title{Approximating k-Center via Farthest-First on $\delta$-Covers}

\author{
    Jason R. Wilson \\
    Department of Mathematics \\
    Virginia Tech \\
    \texttt{jasonwil@vt.edu}
}
\date{\today}

\begin{document}

\maketitle

\begin{abstract}
The farthest-first traversal of Gonzalez is a classical
$2$-approximation algorithm for solving the $k$-center problem, but its
sequential nature makes it difficult to scale to very large datasets.
In this work we study the effect of running farthest-first on a
$\delta$-cover of the dataset rather than on the full set of points.
A $\delta$-cover provides a compact summary of the data in which every
point lies within distance $\delta$ of some selected center.
We prove that if farthest-first is applied to a $\delta$-cover,
the resulting $k$-center radius is at most twice the optimal radius
plus $\delta$.
In our experiments on large high-dimensional datasets, we show that restricting the input to a
$\delta$-cover dramatically reduces the running time of the
farthest-first traversal while only modestly increasing the
$k$-center radius.
\end{abstract}

\section{Introduction}

The $k$-center problem is a fundamental clustering problem in which
a set $P$ of points in a metric space must be covered by $k$ balls so
as to minimize the maximum distance from a point in $P$ to its nearest
center. The problem is NP-hard even in the Euclidean plane
\cite{fowler1981optimal,megiddo1984complexity}.

A simple greedy algorithm due to Gonzalez selects centers iteratively
by repeatedly choosing the point farthest from the current set of
centers~\cite{gonzalez1985clustering}. This farthest-first traversal is
closely related to earlier farthest-point heuristics studied by
Rosenkrantz, Stearns, and Lewis~\cite{Rosenkrantz1977}. The algorithm
yields a $2$-approximation for the $k$-center problem
\cite{gonzalez1985clustering,hochbaum1985best}.

Although the farthest-first traversal provides a strong theoretical
guarantee, it is inherently sequential and typically requires scanning
much of the dataset at each iteration, especially in high dimensions
where geometric pruning becomes ineffective. These properties make the
algorithm difficult to parallelize effectively on large datasets.

In this work, we address these scaling concerns by running
farthest-first on a significantly smaller dataset obtained via a
$\delta$-cover of the data. A $\delta$-cover is a subset of the dataset, whose elements are called centers, such that every point lies within distance $\delta$ of
some center. Informally, the $\delta$-cover provides a geometric
summary of the dataset that can be used to significantly accelerate
downstream tasks.

This raises a natural question: what approximation guarantee can be
obtained if farthest-first is run on a $\delta$-cover instead of the
full dataset? We show that the resulting $k$-center radius is at most
twice the optimal radius plus $\delta$, demonstrating that
farthest-first can be applied to a compact summary of the data while
largely preserving its classical approximation guarantee.

Empirically, we evaluate the approach on two large high-dimensional
datasets: MNIST (1 million points with 100-dimensional PCA features)
and SUSY (5 million points with 18 dimensions). In these experiments,
the $\delta$-cover retains only two percent of the original points,
dramatically reducing the input size. 
Running on the reduced set yields about $50\times$
speedup in the farthest-first traversal itself, with only modest
increases in the $k$-center radius.

In our experiments, the $\delta$-cover is constructed using a simple
uncovered-first strategy: we scan the dataset sequentially and add a
point to the cover whenever it lies farther than $\delta$ from all
previously selected centers. In low-dimensional Euclidean spaces,
$\delta$-covers can be constructed in $O(n)$ time using grid-based
methods~\cite{Dumitrescu2020,Friederich2023}. Scalable construction of
$\delta$-covers in high dimensions is beyond the scope of this
work.

\section{Related Work}

The idea of solving clustering problems on a reduced subset of the
data is not new.  In particular, the notion of a \emph{coreset}~\cite{HarPeled2004} has
been widely used to accelerate clustering algorithms by reducing the
size of the input while approximately preserving the clustering
objective.  
Our work differs from the coreset 
approach in that we do not attempt to construct a subset that
directly approximates the clustering objective.  Instead, we study the
effect of restricting farthest-first traversal to a 
$\delta$-cover of the dataset and analyze how this restriction affects
the classical approximation guarantee.

The construction of covers of point sets has been studied extensively
in computational geometry. For example, Dumitrescu et
al.~\cite{Dumitrescu2020} analyze online algorithms for unit covering
in Euclidean spaces, and Friederich et al.~\cite{Friederich2023}
experimentally evaluate algorithms for covering massive point sets in two
dimensions. Their results indicate that such covers can often be constructed very
efficiently, particularly in low-dimensional settings. This evidence
supports the practical use of $\delta$-covers to accelerate downstream tasks
such as farthest-first traversal.

\section{Preliminaries}

In this section we establish our key notation and review farthest-first traversal as well as its classical approximation result for the $k$-center problem.

\subsection{Notation}

Let $(P,d)$ be a finite metric space.
Throughout the paper, all subsets are taken with respect to $P$.

\vspace{2pt}
For $s \in P$ and $C \subseteq P$, define
\[
d(s,C)
:=
\min_{ c \in C} d(s,c)
\]

\vspace{2pt}
For $C \subseteq P$ and $S \subseteq P$, define
\[
\cost(C,S)
=
\max_{ s \in S} d(s,C)
\]

\vspace{2pt}
For a fixed integer $k \ge 1$, define
\[
\OPT
=
\min_{ \substack{C \subseteq P \\ |C| = k}}
\cost(C,P)
\]

\subsection{Farthest-First Traversal}

We recall the classical farthest-first traversal of Gonzalez for the
$k$-center problem (see Algorithm~1).

\begin{algorithm}
\caption{Farthest-First on $S$}
\begin{algorithmic}[1]
\State \textbf{Input:} $S \subseteq P$, integer $k \ge 1$
\State \textbf{Output:} $C \subseteq S$ with $|C| = k$

\State Choose $c_1 \in S$ arbitrarily
\State $C \gets \{c_1\}$

\For{$i = 2$ to $k$}
    \State $c_i \gets \arg\max_{s \in S} d(s,C)$
    \State $C \gets C \cup \{c_i\}$
\EndFor

\State \Return $C$
\end{algorithmic}
\end{algorithm}

For any $S \subseteq P$, we write $\FF(S)$ to denote
any set of centers obtainable by a valid execution
of Algorithm~1 on $S$.
We emphasize that $\FF(S)$ is not a single deterministic
set: different choices of the initial center and
tie-breaking may produce different outputs.
All results in this paper hold uniformly over
all valid farthest-first executions.

\subsection{Classical Approximation Guarantee}

We recall the classical guarantee for farthest-first
when applied to the full dataset $P$.

\begin{theorem}[Gonzalez, 1985]
For every execution of Algorithm~1 on $P$,
\[
\cost(\FF(P),P) \le 2\,\OPT
\]
\end{theorem}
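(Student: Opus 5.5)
The plan is the standard pigeonhole argument. Let $C = \{c_1,\dots,c_k\}$ be the output of any valid execution of Algorithm~1 on $P$, and set $r := \cost(C,P)$. If $r = 0$ there is nothing to prove, so assume $r > 0$. Choose a point $c_{k+1} \in P$ attaining $d(c_{k+1},C) = r$; this is the point farthest-first would select as its $(k+1)$-st center.

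\textbf{Step 1: separation.} I first show that the $k+1$ points $c_1,\dots,c_{k+1}$ are pairwise at distance at least $r$. Take $i < j \le k+1$ and write $C_{j-1} = \{c_1,\dots,c_{j-1}\}$.
\begin{itemize}[nosep]
\item For $j \le k$, the point $c_j$ was chosen as a maximizer of $d(\cdot, C_{j-1})$ over $P$. Since $C_{j-1} \subseteq C$, distances to $C_{j-1}$ dominate distances to $C$. Hence
\[
d(c_j, C_{j-1}) \;\ge\; d(c_{k+1}, C_{j-1}) \;\ge\; d(c_{k+1}, C) \;=\; r .
\]
\item For $j = k+1$, the same bound $d(c_{k+1}, C) = r$ holds by the choice of $c_{k+1}$.
\end{itemize}
In either case $d(c_i,c_j) \ge d(c_j, C_{j-1}) \ge r$. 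As a byproduct, the greedy radii $d(c_j,C_{j-1})$ are nonincreasing in $j$, though only the displayed inequality is needed.

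\textbf{Step 2: pigeonhole.} Let $C^*$ be an optimal set of $k$ centers, so every point of $P$ lies within $\OPT$ of some element of $C^*$. The $k+1$ points $c_1,\dots,c_{k+1}$ are each assigned to a nearest center of $C^*$. Since there are only $k$ centers, two distinct points $c_i$ and $c_j$ share the same nearest center $c^* \in C^*$. The triangle inequality then gives
\[
r \;\le\; d(c_i,c_j) \;\le\; d(c_i,c^*) + d(c^*,c_j) \;\le\; 2\,\OPT,
\]
which is the claimed bound.

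\textbf{Main obstacle.} The delicate point is Step 1: the separation must hold uniformly over arbitrary first choices and tie-breaking. The key is to compare each greedy choice against the single fixed witness $c_{k+1}$, rather than reasoning about which points were available at each step. Degenerate cases need a brief remark:
\begin{itemize}[nosep]
\item If $|P| \le k$, or if repeated points make $r = 0$, the inequality is trivial.
\item If $r > 0$, Step 1 shows the $c_i$ are pairwise distinct, so the pigeonhole count over $k+1$ points is legitimate.
\end{itemize}
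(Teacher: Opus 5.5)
Your proof is correct and follows the paper's argument: you take a point $x$ (your $c_{k+1}$) attaining $r$, show that the $k+1$ points of $C \cup \{x\}$ are pairwise at distance at least $r$, and apply pigeonhole and the triangle inequality against an optimal solution. Your chain $d(c_j,C_{j-1}) \ge d(c_{k+1},C_{j-1}) \ge d(c_{k+1},C) = r$ makes rigorous the paper's one-line justification (``because $x$ was never selected''), and your treatment of the $r=0$ case covers the degenerate situations the paper leaves implicit.
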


The inequality states that if the algorithm is run on $P$,
producing a set of centers $C \subseteq P$ with $|C|=k$,
then every point of $P$ lies within distance at most
$2\,\OPT$ of some center in $C$.

\begin{proof}
Let $C = \FF(P)$ and define
\[
r := \cost(C,P) = \max_{p \in P} d(p,C)
\]
Let $x \in P$ satisfy $d(x,C) = r$.

Since $r = d(x,C)$, we have $d(x,c) \ge r$ for every $c \in C$.
At each step the algorithm selects a point maximizing its distance
from the current set of centers. Because $x$ was never selected,
every selected center had distance at least $r$ from the previously
chosen centers. Thus every pair of centers in $C$ has distance
at least $r$.

Thus the set $C \cup \{x\}$ consists of $k+1$
points that are pairwise at distance at least $r$.

Let $C^\star$ be an optimal solution with $|C^\star| = k$
and $\cost(C^\star,P) = \OPT$.
Since $|C^\star| = k$ but $|C \cup \{x\}| = k+1$,
by the pigeonhole principle there exist two distinct points
$u,v \in C \cup \{x\}$ that are assigned to the same
optimal center $c^\star \in C^\star$.

Then
\[
d(u,c^\star) \le \OPT,
\qquad
d(v,c^\star) \le \OPT
\]
By the triangle inequality,
\[
d(u,v) \le 2\,\OPT
\]
Since $d(u,v) \ge r$, we conclude that
\[
r \le 2\,\OPT
\]

Therefore $\cost(C,P) = r \le 2\,\OPT$.
\end{proof}

The factor $2$ in the preceding theorem is tight.
Consider the metric space
\[
P = \{0,1,2\}
\quad\text{with}\quad
d(x,y)=|x-y|
\]
and let $k=1$.

The optimal solution contains the center $1$,
yielding $\OPT = 1$.
However, if the farthest-first algorithm selects $0$ as its
initial center, then
\[
\cost(\FF(P),P) = 2
\]
Thus $\cost(\FF(P),P) = 2\,\OPT$,
showing that the factor $2$ cannot be improved.

\section{Restricting Farthest-First to Geometric Covers}

We now consider running farthest-first on subsets $Q \subseteq P$ that approximate $P$
in a geometric sense.

\begin{definition}[$\delta$-Cover]
Let $\delta > 0$.  A subset $Q \subseteq P$ is a $\delta$-cover of $P$
if for every $p \in P$ there exists $q \in Q$
such that
\[
d(p,q) \le \delta
\]
\end{definition}

Suppose $Q$ is a $\delta$-cover of $P$.
If we apply farthest-first to $Q$ rather than to $P$,
how does $\cost(\FF(Q),P)$ compare to $\OPT$?

For the full dataset $P$, farthest-first satisfies
\[
\cost(\FF(P),P) \le 2\,\OPT
\]
and this bound is tight.

When $Q=P$, we have $\delta=0$, and the classical example with
$P = \{0,1,2\}$ shows that the factor $2$ in front of $\OPT$
cannot be improved.
It is therefore natural to ask whether a universal bound of the form
\[
\cost(\FF(Q),P)
\le
2\,\OPT + \gamma\,\delta
\]
holds for some constant $\gamma \ge 0$.

\begin{proposition}
Suppose there exists a constant $\gamma \ge 0$ such that
\[
\cost(\FF(Q),P)
\le
2\,\OPT + \gamma\,\delta
\]
for every metric space $P$ and every $\delta$-cover $Q \subseteq P$.
Then $\gamma \ge 1$.
\end{proposition}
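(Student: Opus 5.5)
The plan is to refute every $\gamma<1$ with one explicit finite metric space, a $\delta$-cover $Q$ of it, and a valid farthest-first execution on $Q$ that attains $\cost(\FF(Q),P)=2\,\OPT+\delta$ with $\OPT=\delta=1$. Feeding this instance into the hypothesised inequality then gives $3\le 2+\gamma$, so $\gamma\ge 1$.

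\textbf{What the example must look like.} The proof of the Gonzalez bound, adapted to $Q$, suggests the shape.
\begin{itemize}[leftmargin=*]
\item Let $x\in P$ realise the cost $r$, and let $q\in Q$ satisfy $d(x,q)\le\delta$. Then the $k+1$ points $C\cup\{q\}$ are pairwise at distance at least $r-\delta$, which gives $r\le 2\,\OPT+\delta$.
\item For equality, two of these points, say $q$ and some $c\in C$, must share an optimal center $h$ with $d(q,c)=2\,\OPT$. Also $x$ must sit on the far side of $q$, so that $d(x,c)=2\,\OPT+\delta$.
\item Then $x$ cannot be served by $h$. So $x$ needs its own optimal cluster, and that cluster must contain no farthest-first center.
\end{itemize}

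\textbf{Why $k=1$ and the real line fail.} With $k=1$, the single optimal center $h$ gives $d(x,c)\le d(x,h)+d(h,c)\le 2\,\OPT$, so no excess over $2\,\OPT$ is possible. On the real line, earlier trial configurations of the form $\{0,1,2,2+\delta\}$ and its variants gave no excess either. So I take $k=2$ and a tree (graph) metric.

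\textbf{The construction.} Take the unweighted tree with vertices $c_1,c_2,h,q,x$ and edges $c_1h$, $c_2h$, $qh$, $qx$, and let $d$ be the shortest-path metric. Set $P=\{c_1,c_2,h,q,x\}$, $Q=\{c_1,c_2,q\}$, $\delta=1$ and $k=2$.

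\textbf{Checks, in order.}
\begin{enumerate}
\item \emph{$Q$ is a $1$-cover of $P$.} We have $d(h,c_1)=1$ and $d(x,q)=1$.
\item \emph{$\OPT=1$.} The centers $\{h,x\}$ give cost $1$. Any two centers leave an uncovered vertex, so $\OPT\ge 1$; for example, each vertex has distance $0$ only to itself.
\item \emph{A valid execution on $Q$ outputs $\{c_1,c_2\}$.} Start with $c_1$. Then $d(c_2,c_1)=d(q,c_1)=2$, so choosing $c_2$ is a valid tie-break.
\item \emph{The cost of this execution.} We get $\cost(\{c_1,c_2\},P)=d(x,\{c_1,c_2\})=3=2\,\OPT+\delta$.
\end{enumerate}
Since the paper quantifies over all valid executions, this one tie-broken run suffices.

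\textbf{Main obstacle.} The hard part was finding the instance, because the triangle inequality forbids the naive configurations, as the $k=1$ argument shows. The key realisation is that the point $x$ must be covered in the optimum by a cluster containing no farthest-first center. That cluster is forced to be cheap, here the singleton $\{x\}$, while the other cluster absorbs both farthest-first centers together with $q$. Once the tree is written down, the remaining verification is routine.
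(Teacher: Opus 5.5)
Your proof is correct, and it takes a genuinely different route from the paper. All four of your checks are right.

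The paper uses the line instance $P=\{0,1,\delta+1\}$, $Q=\{0,1\}$, $k=2$, where $\cost(\FF(Q),P)=\delta$ and $\OPT=1$. There the hypothesised bound fails only in the limit, through $(1-\gamma)\delta\le 2$ as $\delta\to\infty$.

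You instead exhibit a single five-point tree with $\OPT=\delta=1$ on which the main theorem's bound is attained with equality, $\cost(\FF(Q),P)=3=2\,\OPT+\delta$. So $\gamma\ge 1$ follows from one inequality with no limiting argument. This buys more than the proposition asks for: it shows that $2\,\OPT+\delta$ is exactly sharp, which the paper's instance (cost $\delta$) does not. No instance with $|Q|=k$ could do this, since then $\FF(Q)=Q$ and the cover property caps the cost at $\delta$.

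The paper's instance buys robustness in return. Because $|Q|=k$, every valid execution outputs $Q$. Yours needs the run to start at $c_1$ or $c_2$ and to take the other of the two rather than $q$; any run that selects $q$ has cost $2$. That is legitimate under the paper's convention that bounds must hold for every valid execution, as you note, but it is a real dependence on the initial center and on tie-breaking.

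The paper's family also shows $\gamma\ge 1$ even if the factor $2$ in front of $\OPT$ were replaced by any constant, since $\delta/\OPT\to\infty$. Your construction recovers this too: stretch the edge $qx$ to length $\delta\ge 1$. Then $Q$ is still a $\delta$-cover, $\OPT=1$ still holds, and the same run costs $2+\delta$ for every $\delta\ge 1$.
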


\begin{proof}
Let $P=\{0,1,\delta+1\}$ with $d(x,y)=|x-y|$ and set $k=2$.
Let $Q=\{0,1\}$.
Then $Q$ is a $\delta$-cover of $P$.
Running farthest-first on $Q$ yields $\FF(Q)=\{0,1\}$,
so $\cost(\FF(Q),P)=\delta$.
An optimal solution for $P$ is $\{1,\delta+1\}$,
giving $\OPT=1$.
The proposed bound would therefore require
\[
\delta \le 2 + \gamma\,\delta
\]
or equivalently $(1-\gamma)\delta \le 2$.
If $\gamma<1$, this fails for sufficiently large $\delta$.
Hence $\gamma \ge 1$.
\end{proof}

The preceding proposition establishes a necessary condition:
any bound of the proposed form must satisfy $\gamma \ge 1$.
It is therefore natural to ask whether the sharp bound
\begin{equation} \label{main_result}
\cost(\FF(Q),P)
\le
2\,\OPT + \delta
\end{equation}
actually holds.  Our main result is that \eqref{main_result} does indeed hold.

\subsection{A Failed Attempt}

A first attempt at showing \eqref{main_result}
is to compare $\cost(\FF(Q),P)$ directly to $\cost(\FF(P),P)$.
Since $Q$ is a $\delta$-cover of $P$,
one might hope that
\begin{equation} \label{false_inequality}
\cost(\FF(Q),P)
\le
\cost(\FF(P),P) + \delta
\end{equation}
If \eqref{false_inequality} were true, 
the classical guarantee
$\cost(\FF(P),P) \le 2\,\OPT$
would immediately imply the desired bound \eqref{main_result}.  

Indeed, restricting farthest-first to a $\delta$-cover
can sometimes reduce the cost.

Consider $P=\{0,2,3,4\}$ with $d(x,y)=|x-y|$
and $k=2$.
Let $Q=\{0,3\}$, which is a $\delta$-cover of $P$ with $\delta = 1$.

Running farthest-first on $P$ starting at $0$
yields $\FF(P)=\{0,4\}$ and
\[
\cost(\FF(P),P)=2.
\]
On $Q$, we obtain $\FF(Q)=\{0,3\}$ and
\[
\cost(\FF(Q),P)=1.
\]

This example shows that restriction may decrease the covering cost,
since it can prevent farthest-first from selecting an unfavorable center.
However, the behavior of farthest-first
under restriction is not monotone in general.

The following two-dimensional example
demonstrates that \eqref{false_inequality}
does not hold in general.

Let
\[
P=\{(0,0),(-2,3),(0,3),(2,3),(0,4)\}
\]
with Euclidean distance and define
\[
Q=\{(0,0),(-2,3),(0,3),(2,3)\}
\]
Then $Q$ is a $\delta$-cover of $P$ with $\delta = 1$.

Let $k = 2$.  
Starting at $(0,0)$, farthest-first on $P$
selects $(0,4)$ as the second center, yielding
\[
\FF(P)=\{(0,0),(0,4)\}
\quad\text{and}\quad
\cost(\FF(P),P)=\sqrt{5}
\]
On $Q$, breaking ties arbitrarily,
the algorithm may select $(-2,3)$, giving
\[
\FF(Q)=\{(0,0),(-2,3)\}
\quad\text{and}\quad
\cost(\FF(Q),P)=\sqrt{13}
\]
Since $\sqrt{13} > \sqrt{5}+1$,
the inequality \eqref{false_inequality} fails.

\section{The Lifting Lemma}

Our first attempt to understand how running farthest-first on the
$\delta$-cover $Q$ affects the covering cost was to compare the
coverings $\FF(Q)$ and $\FF(P)$ while keeping the dataset $P$ fixed.
The preceding example shows that this approach is too ambitious:
the two coverings can differ substantially, and their costs on $P$
cannot be related by such a simple $\delta$-additive bound.

Instead, we fix the covering and vary the dataset.
In this setting the triangle inequality provides a clean way to
reason about the effect of $\delta$.
In particular, it yields a simple relationship between the cost
of a fixed center set $C$ when evaluated on $Q$ and when evaluated
on $P$.

\begin{lemma}[Lifting Lemma]
Let $Q \subseteq P$ be a $\delta$-cover of $P$.
For every $C \subseteq Q$,
\[
\cost(C,P)
\le
\cost(C,Q) + \delta
\]
\end{lemma}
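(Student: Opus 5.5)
The plan is a pointwise triangle-inequality argument, followed by taking a maximum over $P$. Fix $C \subseteq Q$ and an arbitrary point $p \in P$. It suffices to show
\[
d(p,C) \le \cost(C,Q) + \delta,
\]
because taking the maximum over $p \in P$ then gives the claim directly from the definition $\cost(C,P) = \max_{p\in P} d(p,C)$. We may assume $C$ is nonempty, as it is for every output of Algorithm~1; otherwise $d(\cdot,C)$ is undefined.

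First, since $Q$ is a $\delta$-cover of $P$, we choose $q \in Q$ with $d(p,q) \le \delta$. Next, since $q \in Q$, the definition of cost gives $d(q,C) \le \cost(C,Q)$. Because $C$ is finite, we can fix a center $c \in C$ attaining the minimum, so $d(q,c) = d(q,C)$.

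The triangle inequality then yields
\[
d(p,C) \le d(p,c) \le d(p,q) + d(q,c) \le \delta + \cost(C,Q).
\]
Since $p$ was arbitrary, maximizing over $p \in P$ completes the argument.

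The only point needing care is that the intermediate point $q$ must lie in $Q$, so that its distance to $C$ is controlled by $\cost(C,Q)$ rather than $\cost(C,P)$; the cover property guarantees this. The hypothesis $C \subseteq Q$ is not used in the inequality itself. It only ensures that $\cost(C,Q)$ is the quantity that farthest-first on $Q$ actually controls. I expect no real obstacle: this is a one-line lifting argument, and its role is to be combined later with the Gonzalez argument applied inside the metric space $(Q,d)$.
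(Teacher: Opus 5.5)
Your proposal is correct and matches the paper's proof: both pick $q \in Q$ with $d(p,q) \le \delta$ and apply the triangle inequality through $q$. The only difference is cosmetic: the paper fixes $p$ attaining $\cost(C,P)$ at the outset, while you argue for arbitrary $p$ and then maximize. Your side remarks, that $C$ must be nonempty and that $C \subseteq Q$ is not needed for the inequality itself, are also accurate.
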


\begin{proof}
Let $p \in P$ satisfy $d(p,C)=\cost(C,P)$.
Since $Q$ is a $\delta$-cover of $P$,
there exists $q \in Q$ with $d(p,q) \le \delta$.
By the triangle inequality,
\[
d(p,C)
\le
d(p,q) + d(q,C)
\le
\delta + \cost(C,Q)
\]
Since $d(p,C)=\cost(C,P)$,
the result follows.
\end{proof}

Applying the lifting lemma with $C=\FF(Q)$ gives
\begin{equation} \label{lifting_cor}
\cost(\FF(Q),P)
\le
\cost(\FF(Q),Q) + \delta
\end{equation}

Thus, to establish \eqref{main_result}, it suffices to control
$\cost(\FF(Q),Q)$ when $Q$ is a $\delta$-cover.

\subsection{A Second Failed Attempt}

A natural next step is to ask whether
\begin{equation} \label{false_inequality_2}
\cost(\FF(Q),Q)
\le
\cost(\FF(P),P)
\end{equation}
holds.
If \eqref{false_inequality_2} held, the classical guarantee
$\cost(\FF(P),P)\le 2\,\OPT$
would imply
\[
\cost(\FF(Q),Q)\le 2\,\OPT
\]
and substituting this bound into \eqref{lifting_cor} would yield
\eqref{main_result}.

However, the inequality
\[
\cost(\FF(Q),Q)
\le
\cost(\FF(P),P)
\]
does not hold in general.

Let
\[
P=\{(0,0),(-2,3),(2,3),(0,4)\}
\]
with Euclidean distance, and define
\[
Q=\{(0,0),(-2,3),(2,3)\}
\]
Then $Q$ is a $\delta$-cover of $P$ with $\delta = \sqrt{5}$.

Let $k=2$.
Starting at $(0,0)$, farthest-first on $P$ selects $(0,4)$
as the second center, yielding
\[
\cost(\FF(P),P)=\sqrt{5}
\]
On $Q$, the algorithm may select $(-2,3)$ as the second center,
giving
\[
\cost(\FF(Q),Q)=\sqrt{13}
\]
Since $\sqrt{13}>\sqrt{5}$, the inequality \eqref{false_inequality_2} fails.

\section{The Main Result}

Two straightforward attempts to prove \eqref{main_result} failed because
they attempted to compare the greedy runs $\FF(Q)$ and $\FF(P)$ directly.
The key insight that leads to our main result is that this comparison is unnecessary:
it is much easier to compare $\FF(Q)$ to an optimal solution.

In fact, the classical packing argument of Gonzalez already provides
exactly the bound we need.  
Remarkably, that packing argument applies without modification when farthest-first is run on any subset of $P$.

\begin{lemma}[Subset Lemma]
Let $S \subseteq P$ with $|S| \ge k$.
Then
\[
\cost(\FF(S),S) \le 2\,\OPT
\]
\end{lemma}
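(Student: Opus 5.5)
We will follow the packing argument from the proof of Gonzalez's theorem, with $S$ playing the role of the dataset on which the greedy runs. The optimal solution $C^\star$ stays the one for all of $P$.

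Let $C = \FF(S)$. The hypothesis $|S| \ge k$ guarantees that Algorithm~1 can run for all $k$ iterations, and if ties are broken among points not yet chosen, $C$ has exactly $k$ elements. Put $r := \cost(C,S)$. If $r = 0$ the bound is trivial, so assume $r > 0$. Pick $x \in S$ with $d(x,C) = r$; then $x \notin C$.

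\textbf{Packing step.} We show that the $k+1$ points of $C \cup \{x\}$ are pairwise at distance at least $r$. Consider the center $c_i$ chosen at iteration $i \ge 2$, and let $C_{i-1}$ be the set of centers chosen before it. Since $x \in S$ was available at that iteration,
\[
d(c_i, C_{i-1}) = \max_{s\in S} d(s,C_{i-1}) \ge d(x, C_{i-1}) \ge d(x, C) = r .
\]
The last inequality holds because $C_{i-1} \subseteq C$. So each center is at distance at least $r$ from every earlier center. Every center is also at distance at least $r$ from $x$, since $d(x,C)=r$. This is the step that needs care: all maximizations range over $S$, and $x$ must come from $S$ for the comparison to hold. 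That is exactly why we measure the cost on $S$.

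\textbf{Pigeonhole step.} Take an optimal $C^\star$ for $P$, with $|C^\star|=k$ and $\cost(C^\star,P)=\OPT$. Every point of $C\cup\{x\}\subseteq S\subseteq P$ lies within $\OPT$ of some element of $C^\star$. There are $k+1$ points and only $k$ optimal centers, so two distinct points $u,v$ share an optimal center $c^\star$. The triangle inequality then gives
\[
r \le d(u,v) \le d(u,c^\star) + d(c^\star,v) \le 2\,\OPT .
\]
Hence $\cost(\FF(S),S) \le 2\,\OPT$.

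The main point to check is that nothing in the classical argument relied on the greedy running over all of $P$. The packing step uses only that $x$ lies in the set being scanned. The pigeonhole step uses only that $S \subseteq P$ is covered by $C^\star$ within $\OPT$. So the argument transfers verbatim.
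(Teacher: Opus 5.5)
Your proof is correct and follows the paper's own argument: pack the $k+1$ points of $C \cup \{x\}$ at pairwise distance at least $r$, then apply pigeonhole against the $k$ optimal centers for $P$ and use the triangle inequality. You also supply details the paper leaves implicit, namely the chain $d(c_i,C_{i-1}) \ge d(x,C_{i-1}) \ge d(x,C) = r$ and the $r=0$ case that ensures $x \notin C$.
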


Note that $\OPT$ denotes the optimal $k$-center cost for the full dataset $P$, not for the subset $S$.

\begin{proof}
Let $C = \FF(S)$ and define
\[
r := \cost(C,S)
\]
Choose $x \in S$ such that $d(x,C)=r$.

Since $x$ was not selected as a center,
the centers in $C$ must be pairwise
at distance at least $r$.
Thus the $k+1$ points in $C \cup \{x\}$
are pairwise at distance at least $r$.

Let $C^\star \subseteq P$ be an optimal solution
with $\cost(C^\star,P)=\OPT$.
Every point of $S \subseteq P$
lies within distance $\OPT$ of some center in $C^\star$.

Since $|C^\star| = k$ but $|C \cup \{x\}| = k+1$,
by the pigeonhole principle there exist
two distinct points $u,v \in C \cup \{x\}$
assigned to the same optimal center $c^\star \in C^\star$.
Then
\[
d(u,c^\star) \le \OPT,
\qquad
d(v,c^\star) \le \OPT
\]
and by the triangle inequality
\[
d(u,v) \le 2\,\OPT
\]

Since $d(u,v) \ge r$, we conclude
\[
r \le 2\,\OPT
\]

Therefore $\cost(C,S) = r \le 2\,\OPT$
\end{proof}

At first glance this lemma may seem unrelated to our goal,
since it bounds the cost of $\FF(S)$ only on the set $S$ itself,
not on $P$. However, when $S$ is a $\delta$-cover of $P$,
the lifting lemma allows us to transfer this bound
to the cost on the full dataset with the addition of the $\delta$ term.

\begin{theorem}
Let $Q \subseteq P$ be a $\delta$-cover of $P$
with $|Q| \ge k$.
Then
\[
\cost(\FF(Q),P)
\le
2\,\OPT + \delta
\]
\end{theorem}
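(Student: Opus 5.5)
The plan is to chain the two lemmas just established, with no new geometric argument. Fix any valid execution of Algorithm~1 on $Q$ and write $C = \FF(Q)$. Since the algorithm only ever selects points of its input, $C \subseteq Q$, and because $|Q| \ge k$ the output satisfies $|C| = k$. We will be careful that it is the \emph{same} set $C$ used in both lemmas.

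First, apply the Lifting Lemma to this $C$. Its hypothesis $C \subseteq Q$ holds, and $Q$ is a $\delta$-cover of $P$, so
\[
\cost(C,P) \le \cost(C,Q) + \delta,
\]
which is exactly \eqref{lifting_cor}. Second, apply the Subset Lemma with $S = Q$. It is legitimate because $Q \subseteq P$ and $|Q| \ge k$, and it yields $\cost(C,Q) \le 2\,\OPT$, where $\OPT$ is the optimum for the full dataset $P$. Substituting this into the previous display gives
\[
\cost(\FF(Q),P) \le 2\,\OPT + \delta.
\]
Since $C$ was an arbitrary valid execution, the bound holds uniformly over all tie-breaking choices and all choices of the initial center.

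There is essentially no obstacle. The only point needing care is that the Subset Lemma bounds the cost in terms of the optimum for $P$, not the (possibly smaller) optimum for $Q$. This is what its proof provides, since the optimal centers $C^\star \subseteq P$ cover every point of $Q \subseteq P$ within $\OPT$. The remaining subtlety is the hypothesis $|Q| \ge k$, which ensures the packing argument has $k+1$ points $C \cup \{x\}$ to work with. If $|Q| = k$, the argument still goes through because $\cost(C,Q) = 0$ trivially. Finally, the proposition above shows that this additive constant $1$ in front of $\delta$ cannot be improved, so the theorem is sharp in that term.
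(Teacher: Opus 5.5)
Your proposal is correct and is exactly the paper's proof: the Subset Lemma with $S=Q$ gives $\cost(\FF(Q),Q)\le 2\,\OPT$, and the Lifting Lemma with $C=\FF(Q)$ adds $\delta$. One small quibble with your aside: the $k$-center optimum of $Q$ (centers restricted to $Q$) need not be smaller than $\OPT$, and it can even be larger (e.g.\ $P=\{0,1,2\}$, $Q=\{0,2\}$, $k=1$), which is exactly why bounding against $\OPT$ for $P$, rather than applying Gonzalez's theorem to $Q$, is the right move.
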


\begin{proof}
By the subset lemma,
\[
\cost(\FF(Q),Q) \le 2\,\OPT
\]

Applying the lifting lemma with $C=\FF(Q)$ gives
\[
\cost(\FF(Q),P)
\le
\cost(\FF(Q),Q) + \delta
\]

Combining these inequalities yields
\[
\cost(\FF(Q),P) \le 2\,\OPT + \delta
\]
\end{proof}

\section{Empirical Illustration}

In this section we compare the $k$-center radii 
$\cost(\FF(P),P)$ and $\cost(\FF(Q),P)$,
where $Q$ is a $\delta$-cover of $P$.
Our goal is to observe how the covering radius changes
when farthest-first is applied to the reduced dataset $Q$.

For each dataset, the $\delta$-cover was found using the
uncovered-first algorithm (see Algorithm~2).

\begin{algorithm}[H]
\caption{Uncovered-First $\delta$-Cover Construction}
\begin{algorithmic}[1]
\State \textbf{Input:} $P=\{x_1,\dots,x_n\}\subseteq \mathbb{R}^d$, radius $\delta>0$
\State \textbf{Output:} $\delta$-cover $Q \subseteq P$

\State $Q \gets \{x_1\}$

\For{$i=2$ to $n$}
    \If{$d(x_i,Q) > \delta$}
        \State $Q \gets Q \cup \{x_i\}$
    \EndIf
\EndFor

\State \Return $Q$
\end{algorithmic}
\end{algorithm}

We tested on two large datasets.
The SUSY dataset contains $5{,}000{,}000$ points (18 dimensions, $z$-transformed),
and MNIST contains $1{,}000{,}000$ points (100-dimensional PCA).
All computations were performed in single precision.
Since both datasets are high-dimensional, our farthest-first
implementation did not use geometric acceleration.

\begin{center}
\begin{tabular}{lcccccccc}
\toprule
Dataset & $|P|$ & $\delta$ & $|Q|$ & $k$ &
$\cost(\FF(P),P)$ & $\cost(\FF(Q),P)$ &
Ratio & FF Speedup \\
\midrule
SUSY & 5M & 2.0 & 94{,}035 & 1000 &
5.766 & 5.922 & 1.027 & 56$\times$ \\
 &  &  &  & 5000 &
4.049 & 4.335 & 1.071 & 57$\times$ \\
MNIST & 1M & 1500 & 19{,}496 & 1000 &
2039.397 & 2142.366 & 1.051 & 50$\times$ \\
 &  &  &  & 5000 &
1759.204 & 1947.878 & 1.107 & 51$\times$ \\
\bottomrule
\end{tabular}
\end{center}

Here “Ratio” denotes
$\cost(\FF(Q),P) / \cost(\FF(P),P)$.
In both datasets the $\delta$-cover retained roughly $2\%$
of the original points.
Across both $k$ values the increase in covering radius
remained modest (2.7\%–10.7\%),
while the farthest-first traversal itself was about 50 times faster.

In these experiments we observed that
\[
\left|\cost(\FF(Q),P) - \cost(\FF(P),P)\right|
\]
was much smaller than $\delta$.
Although this behavior is not guaranteed by our theory,
it is encouraging to observe in practice.

\section{Conclusion}

In this paper we studied the behavior of farthest-first traversal
when applied to a $\delta$-cover of the dataset.
Although restricting the greedy traversal to a reduced set of points
can substantially accelerate the computation,
it is not immediately clear how this restriction affects the well-known
$2\,\OPT$ bound for the $k$-center problem on the full dataset.

Our main result shows that if $Q$ is a $\delta$-cover of $P$, then
\[
\cost(\FF(Q),P) \le 2\,\OPT + \delta
\]
The key observation is that the classical packing argument of
Gonzalez extends directly to farthest-first traversal on any subset
of the dataset.
Combining this observation with a simple triangle inequality argument
yields the desired bound.

Empirically, we observe that restricting farthest-first to a
$\delta$-cover can substantially accelerate the farthest-first traversal
while only modestly increasing the $k$-center radius.
In our experiments the $\delta$-cover retained roughly two percent
of the original points, resulting in about $50\times$ speedups
for the farthest-first traversal,
with only 2.7\%–10.7\% increases in the resulting $k$-center radius
depending on $k$.

One important issue not addressed in this work is the fast construction of
the $\delta$-cover itself.
In our experiments we used a simple uncovered-first algorithm,
but scalable methods for constructing $\delta$-covers
are important for making this approach to accelerating $k$-center practical.
Although grid-based binning methods enable $O(n)$ $\delta$-cover
construction in low dimensions, fast $\delta$-cover algorithms
for high-dimensional datasets remain an important direction
for future work.

Finally, an interesting open question is whether the degradation relative
to running farthest-first on the full dataset can be bounded directly
in terms of $\delta$. Although we showed that a simple additive $\delta$
bound does not hold in general, it is possible that some other bound
involving $\delta$ may exist (perhaps depending on dimension).
Since our experiments suggest that the difference between
$\cost(\FF(Q),P)$ and $\cost(\FF(P),P)$ is often much smaller than $\delta$
in practice, a probabilistic analysis of the expected cost difference
may also prove fruitful.

\bibliographystyle{plain}
\bibliography{references}

\end{document}